\renewcommand{\@biblabel}[1]{[#1]}
\title{On the group of $\omega^{k}$-preserving diffeomorphisms}
\author{Habib Alizadeh}
\date{} %leave blank
\newtheorem{theorem}{Theorem}
\newtheorem{corollary}[theorem]{Corollary}
\newtheorem{lemma}[theorem]{Lemma}
\newtheorem{remark}[theorem]{Remark}
\newtheorem{definition}[theorem]{Definition}
\newtheorem{def/theorem}[theorem]{Theorem/Definition}
\newtheorem{conjecture}[theorem]{Conjecture}
\newcommand{\C}{\mathbb{C}}
\newcommand{\R}{\mathbb{R}}
\newcommand{\w}{\omega}
\begin{document}

\maketitle

\begin{abstract}
We show that if a diffeomorphism of a symplectic manifold $(M^{2n},\omega)$ preserves the form $\omega^{k}$ for $0 < k < n$ and is connected to identity through such diffeomorphisms then it is indeed a symplectomorphism.
\end{abstract}

%\noindent\keywords{keyword 1; keyword 2; lower case except names, max 6 }\\
\vspace{0.5cm}
\begin{center}
1. INTRODUCTION
\end{center}

A symplectic manifold is a smooth manifold equipped with a closed non-degenerate $2$-form $\w$ called a symplectic form. The dimension of such manifold must be even. Darboux's classical theorem says that symplectic manifolds have no interesting local properties, namely, any symplectic manifold $M$ of dimension $2n$ with a symplectic form $\w$ is locally symplectomorphic to $(\R^{2n}, \w_{0})$ where $\w_{0}$ is the standard symplectic form $dx_{1}\wedge dy_{1} + \dots + dx_{n}\wedge dy_{n}$. A symplectic form defines a certain signed area for surfaces inside the symplectic manifold. It is locally the sum of the areas of projections of the surface into the planes $\langle x_{i},y_{i}\rangle, i=1,\dots,n$, where $\{x_{1},y_{1},\dots, x_{n},y_{n}\}$ are Darboux coordinates. If the dimension of a symplectic manifold is $2$, the symplectic form is just the standard area form in Darboux local coordinates.

In a symplectic manifold $(M^{2n},\w)$ the form $\w^{n}$ is a volume form on $M$ as $\w$ is a non-degenerate $2$-form. If a diffeomorphism preserves the symplectic form it obviously preserves the volume form $\w^{n}$. In \cite{gromovoriginal} Gromov developed his theory of pseudo-holomorphic curves in symplectic manifolds and used it to prove his celebrated non-squeezing theorem. The non-squeezing theorem tells that if the ball $B^{2n}(r)$ of radius $r$ and centered at the origin in $(\R^{2n},\w_{0})$ is symplectically embedded in the cylinder $B^{2}(R)\times \R^{2(n-1)}$ then we must have $r \leq R$. This rigidity phenomenon shows that in fact the two groups, $\text{Symp}(M,\w)$, the group of $\w$-preserving diffeomorphisms, and $\text{Diff}(M,\w^{n})$, the group of volume-preserving diffeomorphisms, are very different. However, there are not any intermediate groups between $\text{Symp}_{0}(M,\w)$, the identity component of $\text{Symp}(M,\w)$, and $\text{Diff}_{0}(M,\w^{n})$, the identity component of $\text{Diff}(M,\w^{n})$. Obviously $\text{Symp}_{0}(M,\w)$ is a subgroup of $\text{Diff}_{0}(M,\w^{n})$. In his book \cite[p.346]{partial} Gromov proves the so called Maximality Theorem: if $(M,\w)$ is a closed connected symplectic manifold and $G$ is a connected subgroup of $\text{Diff}_{0}(M,\w^{n})$ containing $\text{Ham}(M,\w)$, the group of Hamiltonian symplectomorphisms, and an element $\psi$ so that $\psi^{*}\w \neq \pm \w$ then it also contains $\text{Diff}_{0}^{e}(M,\w^{n})$, the group of exact volume-preserving diffeomorphisms. In particular it is equal to $\text{Diff}_{0}(M,\w^{n})$ if $H^{1}(M,\R) = 0$. Consequently if $H^{1}(M,\R) = 0$ and $G$ is a connected group lying in between $\text{Symp}_{0}(M,\w)$ and $\text{Diff}_{0}(M,\w^{n})$ then it is either equal to $\text{Symp}_{0}(M,\w)$ or $\text{Diff}_{0}(M,\w^{n})$. The smoothness of the volume-preserving maps is very important here, i.e. the analogue of the Maximality Theorem is not true when we replace the group $\text{Diff}_{0}(M,\w^{n})$ with the group of volume-preserving homeomorphisms $\text{Homeo}(M,\w^{n})$.

Considering the group $\text{Symp}(M,\w)$ as a subgroup of $\text{Homeo}(M)$, the group of homeomorphisms, and denoting by $\text{Sympeo}(M,\w)$ the closure of $\text{Symp}(M,\w)$ inside $\text{Homeo}(M)$ with $C^{0}$-topology, a celebrated theorem due to Gromov and Eliashberg, see \cite{partial} and \cite{gromoveliashberg}, asserts that if $\phi \in \text{Sympeo}(M,\w)$ is smooth then it is indeed a symplectomorphism. Associated to the theorem of Gromov and Eliashberg there is the $C^{0}$-flux conjecture. 
\begin{conjecture}
For a closed connected symplectic manifold $(M,\w)$ the group $\text{Ham}(M,\w)$ of Hamiltonian diffeomorphisms is $C^{0}$-closed in $\text{Symp}_{0}(M,\w)$. 
\end{conjecture}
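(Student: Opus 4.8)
The displayed statement is the well-known open $C^{0}$-flux conjecture, so in place of a complete proof I outline the natural line of attack and indicate where it stalls. The plan is to reduce the $C^{0}$-closedness of $\text{Ham}(M,\w)$ to a continuity property of the flux homomorphism. On the universal cover one has $\wt{\text{Flux}}\colon \wt{\text{Symp}_0}(M,\w) \ra H^{1}(M,\R)$, $[\{\phi_t\}] \mapsto \int_0^1 [\iota_{X_t}\w]\,dt$, where $X_t$ generates the isotopy and each $\iota_{X_t}\w$ is closed because $\phi_t$ is symplectic. This descends to $\text{Flux}\colon \text{Symp}_0(M,\w) \ra H^{1}(M,\R)/\Gamma_\w$ with kernel exactly $\text{Ham}(M,\w)$, and by Ono's solution of the smooth flux conjecture the flux group $\Gamma_\w$ is discrete, so the target is a Hausdorff Lie group in which $\{0\}$ is closed. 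It therefore suffices to prove that $\text{Flux}$ is continuous for the $C^{0}$ topology: the subgroup $\text{Ham}(M,\w) = \text{Flux}^{-1}(0)$ is then automatically $C^{0}$-closed in $\text{Symp}_0(M,\w)$.

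First I would express the flux through symplectic areas, so that it becomes accessible to $C^{0}$ methods. For a loop $\gamma$ representing a class in $H_1(M,\Z)$, the pairing $\langle \text{Flux}(\phi),[\gamma]\rangle$ equals the $\w$-area of the cylinder swept by $\gamma$ under any isotopy from the identity to $\phi$, well defined modulo the periods in $\Gamma_\w$. Given Hamiltonian maps $\phi_n \ra \phi$ in $C^{0}$ with $\phi \in \text{Symp}_0(M,\w)$, the goal becomes to show $\langle \text{Flux}(\phi),[\gamma]\rangle \in \Gamma_\w$ for every $\gamma$. Since $\phi_n(\gamma) \ra \phi(\gamma)$ uniformly and $M$ is compact, one would try to control the enclosed area through a $C^{0}$-continuous area functional, and to invoke the symplectic rigidity underlying the Gromov--Eliashberg theorem to ensure that no area concentrates or escapes in the limit.

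The hard part, and the reason the conjecture remains open, is exactly this continuity claim. Flux is a \emph{soft} invariant built from a smooth generating vector field, and $C^{0}$-convergence of the time-one maps gives no a priori control over the generating isotopies nor over the areas they sweep. Making the swept-area argument rigorous appears to demand a genuinely $C^{0}$-robust detector of flux. The most promising candidates are the Floer-theoretic spectral invariants and barcodes, which are known to be $C^{0}$-continuous on $\text{Ham}(M,\w)$; one would hope to extend them with twisted Novikov-type coefficients so that their values separate Hamiltonian isotopy classes from the remaining symplectic ones, and then pass to the $C^{0}$-limit. The principal obstacle is that the available $C^{0}$-continuity estimates for these invariants are established only in the untwisted Hamiltonian setting, so extending them across the flux directions --- and thereby excluding the sudden emergence of nonzero flux under uniform limits --- is precisely the missing ingredient.
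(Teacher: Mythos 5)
You have correctly identified that this statement is not a theorem of the paper at all: it is the $C^{0}$-flux conjecture, stated by the author purely for context alongside the Gromov--Eliashberg theorem, and the paper offers no proof of it. Your decision not to manufacture a proof is the right one, and your sketch of the standard reduction is accurate: the flux homomorphism $\text{Flux}\colon \text{Symp}_{0}(M,\w)\rightarrow H^{1}(M,\R)/\Gamma_{\w}$ has kernel exactly $\text{Ham}(M,\w)$, Ono's resolution of the smooth flux conjecture makes the target Hausdorff, and the conjecture would follow from $C^{0}$-continuity of flux --- which is precisely the open ingredient, since $C^{0}$-convergence of time-one maps gives no control over generating isotopies or swept areas. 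There is no gap to report, because there is no claimed proof on either side; your write-up is a faithful account of the state of the problem.
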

Elements of $\text{Sympeo}(M,\w)$ are called symplectic homeomorphisms. A consequence of the Gromov-Eliashberg theorem is that the group $\text{Sympeo}(M,\w)$ is a proper subgroup of the group of volume-preserving homeomorphisms $Homeo(M,\w^{n})$. In \cite{flexibleexample} Buhovsky and Opshtein constructed an example of a symplectic homeomorphism of the the standard $\C^{3}$ whose restriction to the symplectic subspace $\C \times 0\times 0$ is the contraction $(z,0,0)\mapsto (\frac{1}{2}z, 0, 0)$ which is impossible for a symplectic diffeomorphism. Also in \cite{seyfeddini} the autors showed that symplectic homeomorphisms are more rigid than just volume preserving maps by showing that if the image of a coisotropic submanifold is smooth under a symplectic homeomorphism then the image is also coisotropic. Thus the group $\text{Sympeo}(M,\w)$ lies in between $\text{Symp}(M,\w)$ and $\text{Homeo}(M)$ but not equal to neither of them. Getting back to the smooth case and considering $\text{Symp}_{0}(M,\w)$ as a subgroup of $\text{Diff}_{0}(M,\w^{n})$ a very natural example of an intermediate subgroup $G$ would be $\text{Diff}_{0}(M,\w^{k})$ for $0 < k < n$, the identity component of the group of $\w^{k}$-preserving diffeomorphisms. In this paper we explore which side the group $\text{Diff}_{0}(M,\w^{k})$ is equal to. We will prove

\begin{theorem}[Main theorem]
\label{main theorem}
For every symplectic manifold $(M,\w)$ of dimension $2n$ with $n > 2$ we have $\text{Diff}_{0}(M,\w^{k}) = \text{Symp}_{0}(M,\w)$ for all $0 < k < n$.
\end{theorem}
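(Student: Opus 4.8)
The inclusion $\text{Symp}_0(M,\w)\subseteq \text{Diff}_0(M,\w^{k})$ is immediate, since a symplectomorphism preserves $\w$ and hence $\w^{k}$ and a symplectic isotopy is in particular an $\w^{k}$-preserving isotopy; so the content is the reverse inclusion. The plan is to fix $\phi\in\text{Diff}_0(M,\w^{k})$ together with a smooth isotopy $\{\phi_t\}_{t\in[0,1]}$ with $\phi_0=\mathrm{id}$, $\phi_1=\phi$ and $\phi_t^{*}\w^{k}=\w^{k}$ for all $t$, and to prove that in fact $\phi_t^{*}\w=\w$ for every $t$. I would let $X_t=\big(\tfrac{d}{dt}\phi_t\big)\circ\phi_t^{-1}$ be the time-dependent vector field generating the isotopy. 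Differentiating $\phi_t^{*}\w^{k}=\w^{k}$ and invoking the standard isotopy formula $\tfrac{d}{dt}\phi_t^{*}\alpha=\phi_t^{*}\mathcal{L}_{X_t}\alpha$ gives $\phi_t^{*}\big(\mathcal{L}_{X_t}\w^{k}\big)=0$, and as $\phi_t^{*}$ is invertible on forms this yields $\mathcal{L}_{X_t}\w^{k}=0$ for all $t$.

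Next I would unwind this with the Leibniz rule. Writing $\gamma_t:=\mathcal{L}_{X_t}\w$, which is a $2$-form, and using that even-degree forms are central, the derivation property of $\mathcal{L}_{X_t}$ gives $\mathcal{L}_{X_t}\w^{k}=k\,\w^{k-1}\wedge\gamma_t$, so the previous step becomes the pointwise equation $\w^{k-1}\wedge\gamma_t=0$ on all of $M$. Introducing the Lefschetz operator $L\colon\Lambda^{2}T_p^{*}M\to\Lambda^{4}T_p^{*}M$, $\eta\mapsto\w\wedge\eta$, and its iterates $L^{k-1}\colon\Lambda^{2}T_p^{*}M\to\Lambda^{2k}T_p^{*}M$, this reads $L^{k-1}\gamma_t=0$. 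The crux is the purely linear-algebraic claim that $L^{k-1}$ is injective on $2$-forms whenever $1\le k\le n-1$; granting it, $\gamma_t=\mathcal{L}_{X_t}\w=0$ for every $t$, hence $\tfrac{d}{dt}\phi_t^{*}\w=\phi_t^{*}\mathcal{L}_{X_t}\w=0$ and therefore $\phi_t^{*}\w=\phi_0^{*}\w=\w$. In particular $\phi^{*}\w=\w$, the whole isotopy lies in $\text{Symp}(M,\w)$, and so $\phi\in\text{Symp}_0(M,\w)$.

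The main obstacle is thus the pointwise injectivity of $L^{k-1}\colon\Lambda^{2}\to\Lambda^{2k}$ on a symplectic vector space $(V^{2n},\w)$ for $k\le n-1$. This is exactly the linear (hard) Lefschetz property: the operator $L$, together with the adjoint contraction and the degree operator, generates an $\mathfrak{sl}_2$-action on $\Lambda^{\bullet}V^{*}$ under which $\Lambda^{\bullet}V^{*}$ splits into irreducible strings built from primitive forms, and $L^{m}$ embeds $\Lambda^{p}$ into $\Lambda^{p+2m}$ precisely when $m\le n-p$. Taking $p=2$ and $m=k-1$ gives injectivity exactly for $k\le n-1$, which is why the hypothesis $0<k<n$ is the right one and why the borderline volume case $k=n$ is genuinely excluded. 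I expect to present either this representation-theoretic argument or a direct induction on the primitive decomposition; a quick check in Darboux coordinates, where $\w^{k}=k!\sum_{|S|=k}\bigwedge_{i\in S}dx_i\wedge dy_i$, already exhibits the phenomenon on diagonal forms. Finally, should the connecting isotopy be assumed only continuous rather than smooth, I would instead run the argument through isolation: injectivity of the differential $\eta\mapsto k\,\w^{k-1}\wedge\eta$ of $\beta\mapsto\beta^{k}$ at $\beta=\w$ shows this map is a local immersion, so $\w_p$ is an isolated point of $\{\beta:\beta^{k}=\w_p^{k}\}$ in each fiber, and a clopen argument forces the continuous path $t\mapsto(\phi_t^{*}\w)_p$ to stay at $\w_p$.
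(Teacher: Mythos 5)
Your proposal is correct and takes essentially the same route as the paper's own (elementary) proof: differentiate the $\omega^{k}$-preserving isotopy to reduce everything to the pointwise linear-algebra fact that $\alpha\mapsto\omega^{k-1}\wedge\alpha$ is injective on $2$-forms for $0<k<n$, and then prove that fact via the $\mathfrak{sl}_2$/hard Lefschetz decomposition, which is exactly the paper's K\"ahler-identities proof of Lemma \ref{lemma}. The only differences are cosmetic: the paper additionally offers an inductive coordinate proof of the lemma and, for closed manifolds, a separate proof via Gromov's Maximality Theorem, while your closing remark on merely continuous isotopies is extra material not needed under the paper's smoothness conventions.
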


Note that for $n = 2$ the statement is tautological. We explore three different proofs. The first proof uses the Maximality theorem of Gromov, which directly implies the main theorem for closed connected symplectic manifolds up to giving an example of an exact volume preserving diffeomorphism that does not preserve the form $\omega^{k}$ for $0 < k < n$. The second and the third proof are more elementary where we first reduce the problem to a linear algebra problem which we solve using two different methods: one uses the K\"ahler identities and the other is an inductive argument.

\vspace{0.5cm}
\begin{center}
ACKNOWLEDGEMENTS
\end{center}

This research is part of my PhD research program at the Universit\'e de Montreal under the supervision of Egor Shelukhin. I would like to thank him for his comprehensive guidance and many fruitful discussions. I also thank him for pointing out the Maximality Theorem and suggesting to use it to prove the main theorem and teaching me the standard trick in representation theory used in Lemma \ref{span}. I wish to thank Fran\c cois Lalonde for his encouraging enthusiasm about the project and sharing some possible prospects of this project. At the end, I am grateful to Filip Brocic and Marcelo Atallah useful discussions. This research was partially supported by Fondation Courtois.

\vspace{0.5cm}
\begin{center}
2. PRELIMINARIES
\end{center}

In this section we first recall a few relevant definitions, we then state the Maximality theorem of Gromov and present its proof for the convenience of the reader.
\begin{definition}
A smooth manifold $M$ is called a symplectic manifold if it is equipped with a closed non-degenerate $2$-form $\w$, and the form $\w$ is called the symplectic form of $M$. The group of all diffeomorphisms of $M$ that preserve its symplectic form $\w$ is denoted by $Symp(M,\w)$ and its identity component by $Symp_{0}(M,\w)$. 
\end{definition}

\begin{definition}
Let $(M,\w)$ be a symplectic manifold. A diffeomorphism $\phi:M\rightarrow M$ is called Hamiltonian if there is a smooth map $H: M\times [0,1]\rightarrow \R$ so that $\phi = f^{H}_{1}$ where $f^{H}_{t}:M\rightarrow M$ is the flow of the vector field $X_{t}$ that is defined by $\iota_{X_{t}}\w = -dH_{t}$. The group of all Hamiltonian diffeomorphisms of $(M,\w)$ is called the Hamiltonian group and is denoted by $\text{Ham}(M,\w)$.
\end{definition}

\begin{definition}
Let $(M,\w)$ be a symplectic manifold and $1\leq k\leq n$. The group of all diffeomorphisms of $M$ that preserve the $2k$-form $\w^{k}$ is denoted by $\text{Diff}(M,\w^{k})$ and its identity component by $G_{k}:= \text{Diff}_{0}(M,\w^{k})$. In case $k = n$ we also define $\text{Diff}_{0}^{e}(M,\w^{n})$ to be the group of all $\w^{n}$-preserving (volume preserving) diffeomorphisms $f:M\rightarrow M$ so that there is a smooth map $f:M\times [0,1]\rightarrow M$ with $f_{t}$ being a diffeomorphism for all $t$, $f_{0} \equiv id$,  $f_{1}\equiv f$ and $[\iota_{X_{t}}\w^{n}] = 0\in H^{2n-1}_{dR}(M,\R)$ where $X_{t} = (\frac{d}{dt}f_{t})\circ f_{t}^{-1}$. 
\end{definition}

For the convenience of the reader we will state the Maximality theorem of Gromov and present his proof of the theorem below. Then as a consequence we will present the first proof of the main theorem (Theorem \ref{main theorem}).

\begin{theorem}[Maximality Theorem]\cite[p.346]{partial}
\label{maximality theorem}
Let $(M^{2n},\w)$ be a closed connected symplectic manifold. Let $G \subset G_{n}$ be a subgroup that contains the Hamiltonian group $\text{Ham}(M,\w)$ and an element $\psi$ so that $\psi^{*}\w \not\equiv \pm \w$. If $H^{1}(M,\R) = 0$ then $G = G_{n}$. Moreover if $H^{1}(M,\R)\neq 0$ then $\text{Diff}_{0}^{e}(M,\w^{n})\subset G$.
\end{theorem}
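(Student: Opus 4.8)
The plan is to work infinitesimally and then integrate. Recall that the Lie algebra of $G_{n} = \text{Diff}_{0}(M,\w^{n})$ is the space of divergence-free vector fields, i.e. those $X$ with $d(\iota_{X}\w^{n}) = 0$; the Lie algebra of $\text{Diff}_{0}^{e}(M,\w^{n})$ is the subspace on which $\iota_{X}\w^{n}$ is exact; and the Lie algebra of $\text{Ham}(M,\w)$ consists of the Hamiltonian fields $X_{H}$ with $\iota_{X_{H}}\w = -dH$. Since $G$ is a subgroup containing $\text{Ham}(M,\w)$ and the element $\psi$, the space $\mf{g}$ of vector fields generating one-parameter subgroups of $G$ contains every Hamiltonian field, is closed under the Lie bracket, and is invariant under the adjoint action $X \mapsto \psi_{*}X$ (because $\psi\circ\phi^{X}_{t}\circ\psi^{-1}$ is again a path in $G$). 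The goal is to show $\mf{g}$ contains every exact divergence-free field, and indeed all divergence-free fields once $H^{1}(M,\R) = 0$.

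The crux is a pointwise linear-algebra statement, which I would isolate as a spanning lemma. Fix $x \in M$ and choose Darboux coordinates, so that $T_{x}M \cong \R^{2n}$ with $\w_{x}$ the standard form. The linearizations at $x$ of Hamiltonian fields realize exactly the symplectic algebra $\mf{sp}(2n,\R)$ (the $1$-jet of $X_{H}$ is the symplectic gradient of the Hessian of $H$), while the linearizations of divergence-free fields realize the trace-free algebra $\mf{sl}(2n,\R)$. Now $\psi_{*}X_{H} \in \mf{g}$, and its linearization at $y = \psi(x)$ is the $D\psi(x)$-conjugate of a symplectic matrix; since $\psi^{*}\w = \w' \not\equiv \pm\w$ while $(\w')^{n} = \w^{n}$, this conjugate sweeps out the symplectic algebra of a second symplectic form $\tilde\w \neq \pm\w_{y}$ of the same volume. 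Thus $\mf{g}$ supplies at $y$ a nonzero element of the complement $\mf{q}$ in the decomposition $\mf{sl}(2n,\R) = \mf{sp}(2n,\R) \oplus \mf{q}$, where $\mf{q} \cong \wedge^{2}_{0}\R^{2n}$ is the primitive part of $\wedge^{2}\R^{2n}$. The representation-theoretic key is that $\mf{q}$ is an \emph{irreducible} $\mf{sp}(2n,\R)$-module for $n \ge 2$; hence the submodule generated under the bracket by any nonzero element of $\mf{q}$ is all of $\mf{q}$, so the jets of $\mf{g}$ fill out the whole of $\mf{sl}(2n,\R)$. In other words, $\mf{g}$ is pointwise as large as the full divergence-free algebra.

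It remains to integrate this pointwise surjectivity to a statement about $\mf{g}$ and then about $G$ itself. Using the spanning lemma together with a partition of unity (or a Hodge-theoretic splitting of $(2n-1)$-forms) I would first upgrade the infinitesimal conclusion to the assertion that $\mf{g}$ contains every exact divergence-free field, in particular every such field supported in a Darboux ball. One then passes from the Lie algebra to the group by fragmentation: every element of $\text{Diff}_{0}^{e}(M,\w^{n})$ is a product of commutators of diffeomorphisms supported in balls, so the perfectness and simplicity of $\text{Diff}_{0}^{e}(M,\w^{n})$ (Thurston--Banyaga) force $\text{Diff}_{0}^{e}(M,\w^{n}) \subseteq G$. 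Finally, the cohomological dichotomy is immediate from Poincar\'e duality $H^{2n-1}(M,\R) \cong H^{1}(M,\R)$: a divergence-free field is exact divergence-free precisely when the class of $\iota_{X}\w^{n}$ in $H^{2n-1}(M,\R)$ vanishes, so when $H^{1}(M,\R) = 0$ every divergence-free field is exact and $G = G_{n}$, while in general one obtains only $\text{Diff}_{0}^{e}(M,\w^{n}) \subseteq G$. The representation-theoretic step is clean and local; I expect the genuine difficulty to lie in this last globalization, namely converting pointwise infinitesimal generation into an honest group-theoretic conclusion, where one must keep track of the flux in $H^{1}(M,\R)$ and invoke the fragmentation and perfectness results for volume-preserving diffeomorphisms.
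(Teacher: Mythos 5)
Your pointwise representation theory is the right linear-algebra core---the irreducibility of the primitive part of $\bigwedge^{2}$ under $\mf{sp}(2n,\R)$ is exactly what underlies the paper's Lemma \ref{span}---but there is a genuine gap where you pass from jets to the assertion that $\mf{g}$ contains every exact divergence-free field, and that passage is the actual content of Gromov's argument, not a routine globalization. Two concrete problems. First, $\mf{g}$, defined as the set of fields whose flows lie in the abstract subgroup $G$, is not known to be closed under Lie bracket: $G$ carries no closedness hypothesis, and the flow of $[X,Y]$ is only a limit of commutators of flows, so it need not lie in $G$; and $\mf{g}$ is certainly not a $C^{\infty}(M)$-module, so you may not cut off its elements with a partition of unity. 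Second, even granting a Lie algebra structure, knowing that the $1$-jets of elements of $\mf{g}$ fill $\mf{sl}(2n,\R)$ at every point says nothing about which global vector fields belong to $\mf{g}$: a divergence-free field is neither determined by, nor reconstructible by patching, jets at points, and a Hodge-theoretic splitting of $(2n-1)$-forms produces forms, not elements of $\mf{g}$.

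The paper's mechanism closes exactly this gap by applying the spanning fact to \emph{forms} rather than to jets of vector fields, because forms do admit multiplication by functions. Using transitivity of $\text{Ham}(M,\w)$, the element $\psi$, compactness, and Lemma \ref{span}, one produces finitely many $f_{1},\dots,f_{N}\in G$ such that $\w_{i}:=f_{i}^{*}\w$ satisfies $\w_{i}^{n}=\w^{n}$ and $\{(\w_{i}^{n-1})_{x}\}$ spans $\bigwedge^{2n-2}T^{*}_{x}M$ at every $x$ (the ``large family'' of Lemma \ref{large family}). Then for a path $\phi_{t}\in\text{Diff}_{0}^{e}(M,\w^{n})$ with $\iota_{X_{t}}\w^{n}=d\eta_{t}$ one writes $\eta_{t}=\sum_{i}H_{i}\w_{i}^{n-1}$ with globally defined smooth functions $H_{i}$---this is where the partition-of-unity argument legitimately lives---whence $X_{t}=\sum_{i}X_{i}$ with $\iota_{X_{i}}\w_{i}=n\,dH_{i}$. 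Each $X_{i}$ is Hamiltonian for $\w_{i}$, i.e.\ a conjugate by $f_{i}\in G$ of an $\w$-Hamiltonian field, so it manifestly integrates inside $G$, and the product map $D$ in the paper then shows $\phi_{t}\in G$. Your endgame has a further hole: simplicity of $\text{Diff}_{0}^{e}(M,\w^{n})$ (Thurston--Banyaga) constrains its \emph{normal} subgroups, and $G\cap\text{Diff}_{0}^{e}(M,\w^{n})$ has no reason to be normal in $\text{Diff}_{0}^{e}(M,\w^{n})$; likewise fragmentation reduces you to ball-supported exact volume-preserving pieces, but you have not shown those lie in $G$---doing so requires precisely the decomposition above, after which fragmentation and simplicity are no longer needed.
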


Before we start the proof of the Maximality theorem let us prove a linear algebraic lemma which will be used during the proof.

\begin{lemma}
\label{span}
Let $(V,\w)$ be an $2n$-dimensional vector space equipped with a symplectic bilinear form $\w$. If there is a non-degenerate skew-symmetric bilinear form $\alpha : V\times V\rightarrow \R$ so that $\alpha \not\equiv \lambda \w$ for every $\lambda \in \R$ then the orbit of $\alpha$ under the set 
$$Sp(V):= \{ T:V\rightarrow V :\ T^{*}\w = \w \}$$
together with $\w$ itself generates the space of $2$-forms on $V$, i.e.
$${\bigwedge}^{2}V = \big<T^{*}\alpha\ :\ T\in Sp(V) \big> + \langle\w\rangle.$$
\end{lemma}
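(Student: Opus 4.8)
The plan is to view $\bigwedge^{2}V$, the space of $2$-forms (identified with $\bigwedge^{2}V^{*}$ via $\w$ as in the statement), as a finite-dimensional representation of $Sp(V)=Sp(2n,\R)$ under pullback, $T\cdot\beta = T^{*}\beta$, and to read off the conclusion from how this representation decomposes into irreducibles. The first step is to note that the span $U := \langle T^{*}\alpha : T\in Sp(V)\rangle$ is an $Sp(V)$-invariant subspace: since pullback is contravariant, $S^{*}(T^{*}\alpha) = (T\circ S)^{*}\alpha$, and $T\circ S\in Sp(V)$, so $S^{*}U\subseteq U$ and hence $S^{*}U=U$. The whole problem then reduces to classifying the invariant subspaces of $\bigwedge^{2}V$ and checking that any invariant subspace containing $\alpha$ fills up the space once we adjoin $\langle\w\rangle$.

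The structural input I would rely on is the decomposition $\bigwedge^{2}V = \langle\w\rangle\oplus W$, where $\langle\w\rangle$ is the one-dimensional trivial subrepresentation ($\w$ is $Sp(V)$-invariant by the very definition of $Sp(V)$) and $W$ is the space of primitive, i.e.\ $\w$-traceless, $2$-forms. The two properties I need are that $W$ is irreducible and that it contains no trivial subrepresentation, so that the two summands are non-isomorphic irreducibles. The dimension count $\dim W = \binom{2n}{2}-1$ is consistent with $W$ being the second fundamental representation of $\mathfrak{sp}(2n)$. Irreducibility of $W$ I would obtain from the standard unitary-trick argument (average to an $Sp(V)$-invariant inner product, or complexify and use highest-weight theory, then apply Schur's lemma); this is presumably the ``standard trick in representation theory'' credited in the acknowledgements, and it is the real content of the lemma.

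Granting this, the argument finishes formally. Because $\mathfrak{sp}(2n,\R)$ is semisimple, Weyl's theorem gives complete reducibility, so every invariant subspace is a sum of irreducible summands; as $\bigwedge^{2}V$ is the direct sum of the two non-isomorphic irreducibles $\langle\w\rangle$ and $W$, the only invariant subspaces are $0$, $\langle\w\rangle$, $W$, and $\bigwedge^{2}V$. Now write $\alpha = \lambda\w + \alpha_{0}$ with $\alpha_{0}\in W$; the hypothesis $\alpha\not\equiv\lambda\w$ forces $\alpha_{0}\neq 0$, so the invariant subspace $U\ni\alpha$ has nonzero $W$-component and therefore equals $W$ or $\bigwedge^{2}V$. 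In either case $U+\langle\w\rangle = \bigwedge^{2}V$, which is exactly the assertion.

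I expect the single genuine obstacle to be establishing the irreducibility of the primitive part $W$ and the cleanliness of the splitting $\langle\w\rangle\oplus W$; everything after that is bookkeeping with complete reducibility. If one prefers to avoid invoking structure theorems, an alternative is to argue by hand: differentiate the $Sp(V)$-action to the Lie-algebra action $(X\cdot\beta)(u,v)=\beta(Xu,v)+\beta(u,Xv)$ for $X\in\mathfrak{sp}(V)$, put $\alpha_{0}$ into a symplectic normal form, and apply explicit one-parameter subgroups to generate a spanning set of $W$. This is more computational but self-contained, trading representation theory for an explicit orbit computation.
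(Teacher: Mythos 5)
Your proposal is correct, but it takes a genuinely different route from the paper. The paper never touches the structure theory of the full group $Sp(2n,\R)$: it restricts the action to the subgroup $T$ of diagonal symplectic matrices, decomposes $\bigwedge^{2}\R^{2n}$ into the $T$-weight lines spanned by $dx_{i}\wedge dx_{j}$, $dy_{i}\wedge dy_{j}$, $dx_{i}\wedge dy_{j}$ and $dx_{i}\wedge dy_{i}$, and then, using explicitly written symplectic shears and coordinate interchanges, runs a ``loop of steps'' showing that as soon as $U+\langle\w\rangle$ contains one weight component it contains them all; this is the ``standard trick'' credited in the acknowledgements, and it is elementary and self-contained, the only representation-theoretic input being the semisimplicity of torus representations. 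Your argument instead decomposes $\bigwedge^{2}V^{*}=\langle\w\rangle\oplus W$ under the whole group, with $W$ the primitive part, and reduces the lemma to two structural facts: irreducibility of $W$ and the classification of invariant subspaces of a sum of two non-isomorphic irreducibles. What this buys is brevity and conceptual clarity (and it sidesteps a delicate point in the torus approach, namely that the $n$ weight-zero lines $\langle dx_{i}\wedge dy_{i}\rangle$ are mutually isomorphic $T$-representations, so extra care is needed there); what it costs is that the entire content of the lemma is pushed into the assertion that $W$ is the irreducible fundamental representation $V_{\varpi_{2}}$ of $\mathfrak{sp}(2n,\R)$, which you cite rather than prove --- this is precisely the part the paper establishes by hand. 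Two caveats on your sketch: first, your option of ``averaging to an $Sp(V)$-invariant inner product'' cannot work, since $Sp(2n,\R)$ is noncompact and admits no invariant inner product on $W$ (a noncompact simple group has no nontrivial finite-dimensional orthogonal representations with precompact image); the correct form of the unitary trick is your second alternative --- complexify, pass to $\mathfrak{sp}(2n,\C)$ or its compact real form, and check that the complexified primitive part is irreducible, which then implies irreducibility of the real form. Second, Weyl's complete reducibility theorem is a heavier hammer than you need: once $W$ is irreducible and contains no trivial subrepresentation, a direct argument (intersect an invariant subspace with $W$, then project to $\langle\w\rangle$) already shows the only invariant subspaces are $0$, $\langle\w\rangle$, $W$ and $\bigwedge^{2}V^{*}$, after which your finish is exactly right.
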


\begin{proof}
By choosing a a symplectic basis for $(V,\w)$ we assume that $V = \R^{2n}$ and $\w = \w_{0}$, the standard symplectic form on $\R^{2n}$. Let $G = Sp(2n)$, the group of symplectic linear transformations, and $W = \bigwedge^{2}\R^{2n}$. Consider the representation $\rho : G \rightarrow GL(W)$ of $G$ given by $g \mapsto (w\mapsto g^{*}w)$. Suppose $\alpha \in W$ is a non-degenerate $2$-form independent of $\w_{0}$, i.e. $\alpha \not\equiv \lambda\w_{0}$ for any  $\lambda \in \R$. Let $U$ be the subspace of $W$ generated by the orbit of $\alpha$ under the action of $G$, i.e. $U = \langle \rho(g)\alpha : g\in G\rangle$. Then $U + \langle\w_{0}\rangle$ is a $G$-subrepresentation of $W$ and we need to prove that $U + \langle\w_{0}\rangle = W$. Let $T$ be the subgroup of $G$ consisting of the following symplectic diagonal matrices:
\begin{center}
\[
\begin{bmatrix}
t_{1} & 0 & \dots & & 0\\
0 & t_{1}^{-1} & \dots & & 0\\
\vdots &\vdots & \ddots & & \vdots\\
 &  &  & t_{n} & 0\\
0 & 0 & \dots & 0 & t_{n}^{-1}\\
\end{bmatrix}, \ \ \ t_{i} \in \R^{*}, \ i = 1,\dots, n
\]
\end{center}
The space $W$ is a semisimple $T$-representation which decomposes as follows 
$$W = \bigoplus_{1\leq i < j\leq n}\big(E_{ij}\oplus E_{ij}'\big) \bigoplus_{1\leq i\neq j\leq n}F_{ij}  \bigoplus_{1\leq i\leq n} F_{i}$$
where,
$$E_{ij} = \text{span}(dx_{i}\wedge dx_{j}), \ \ E_{ij}' = \text{span}(dy_{i}\wedge dy_{j})$$
$$F_{ij} = \text{span}(dx_{i}\wedge dy_{j}), \ \ F_{i} = \text{span}(dx_{i}\wedge dy_{i})$$
are the weight spaces of the $W$ as a representation of $T$. Since $U + \langle\w_{0}\rangle$ is a $T$-subrepresentation of $W$ and $W$ is a direct sum of multiple of pairwise non-isomorphic irreducible $T$-representations, i.e. semisimple, thus $U + \langle\w_{0}\rangle$ is also direct sum of some of the components of $W$. We show that if $U + \langle\w_{0}\rangle$ contains one of the components of $W$ then $U + \langle\w_{0}\rangle$ contains all of them. We prove this through the following ("loop of") steps: 
\begin{itemize}
\item $F_{r}\subset U + \langle\w_{0}\rangle\implies \oplus_{i,j}F_{ij}\subset U + \langle\w_{0}\rangle$ : Use the symplectic transformation that interchanges the $x_{r}y_{r}$-plane with $x_{i}y_{i}$-plane to see that $\oplus_{i}F_{i}\subset U + \langle\w_{0}\rangle$. To prove inclusion of $F_{ij}$ consider the following symplectic linear transformation $f_{i,j}$,
$$(x,y)\mapsto (x_{1},\dots, x_{i}, \dots, x_{j} - x_{i}, \dots, x_{n}, y_{1},\dots, y_{i} + y_{j},\dots, y_{j}, \dots, y_{n}).$$
Then we have $dx_{i}\wedge dy_{j} = f_{i,j}^{*}(dx_{i}\wedge dy_{i}) - dx_{i}\wedge dy_{i} \in U + \langle\w_{0}\rangle$.

\item $F_{rs} \subset U + \langle\w_{0}\rangle \implies \oplus_{i,j}E_{ij} \oplus_{i,j} E_{ij}' \subset U + \langle\w_{0}\rangle$ : By interchanging $x_{r}y_{r}$-plane with $x_{i}y_{i}$-plane and $x_{s}y_{s}$-plane with $x_{j}y_{j}$-plane we have $\oplus_{i,j}F_{ij}\subset U + \langle\w_{0}\rangle$. To prove inclusion of $E_{ij}$, use the symplectic transformation $(x_{j},y_{j})\mapsto (-y_{j},x_{j})$ on $F_{ij}$ where it is identity on the rest of the coordinates. The analogous argument works for $E_{ij}'$.
\item $E_{rs}$ or $E_{rs}' \subset U + \langle\w_{0}\rangle\implies \oplus_{i}F_{i}\subset U + \langle\w_{0}\rangle$ : If $E_{rs}\subset U + \langle\w_{0}\rangle$ then clearly $E_{rs}'\subset U + \langle\w_{0}\rangle$ and vice-versa. So assume both inclusions of $E_{rs}$ and $E_{rs}'$. Consider the following transformation $f$:
$$(x,y)\mapsto (x_{1},\dots, x_{r} + y_{s},\dots, x_{s} + y_{r},\dots, x_{n},y_{1},\dots, y_{n}).$$
Then we have,
$$dx_{r}\wedge dy_{r} - dx_{s}\wedge dy_{s} = f^{*}(dx_{r}\wedge dx_{s}) - dx_{r}\wedge dx_{s} + dy_{r}\wedge dy_{s}.$$
Therefore for every $1\leq i\leq n$ we have $\alpha_{i} := dx_{r}\wedge dy_{r} - dx_{i}\wedge dy_{i}\in U + \langle\w_{0}\rangle$. So we get,
$$dx_{r}\wedge dy_{r} = \frac{1}{n}(\alpha_{1} + \dots + \alpha_{n} + \w_{0}) \in U + \langle\w_{0}\rangle.$$
By interchanging the $x_{r}y_{r}$-plane with $x_{i}y_{i}$-planes for $1\leq i\leq n$ we get $\oplus_{i}F_{i}\subset U + \langle\w_{0}\rangle$.
\end{itemize}
\end{proof}

\begin{proof}[Proof of Theorem \ref{maximality theorem}]
Let $\phi_{t}\in \text{Diff}_{0}^{e}(M,\w^{n}),\ t\in [0,1]$ and $\phi_{0}=id$. Let $X_{t} = \partial_{t}\phi_{t}\circ \phi_{t}^{-1}$. Then we have that $\iota_{X_{t}}\w^{n} = d\eta_{t}$ for some $\eta_{t}\in \Omega^{2n-2}(M)$. Suppose that $\w_{1},\dots,\w_{N}$ are symplectic forms so that
$${\bigwedge}^{2n-2}_{x}T^{*}M = \big<\{(\w^{n-1}_{1})_{x},\dots, (\w^{n-1}_{N})_{x}\}\big>,\ \ \ \forall x\in M$$
and for all $i,\ \ \w_{i}^{n} = \w^{n}$, such a family of forms $\{\w_{1},\dots, \w_{N}\}$ is called a large family and we will prove their existence in Lemma \ref{large family} below. Thus, one can choose $\eta_{t}$ and $H_{i}\in C^{\infty}(M\times [0,1])$ so that,
$$\eta_{t} = \sum_{i=1}^{N}H_{i}\w_{i}^{n-1}\implies \iota_{X_{t}}\w^{n} = \sum_{i=1}^{N}dH_{i}\w_{i}^{n-1}.$$ 
Define the vector fields $X_{i}$ by $\iota_{X_{i}}\w_{i} = n\ dH_{i}$ for $i = 1,\dots, N$. Then $X_{t} = \sum_{i=1}^{N}X_{i}$.

\begin{remark}
We can choose $\eta$ and consequently $X_{i}$'s canonically using Hodge-deRham Theory.
\end{remark}

\begin{lemma}
\label{large family}
There exist $f_{1},\dots,f_{N}\in G$ such that the symplectic forms $\{\w_{i} := f_{i}^{*}\w\}_{i=1}^{N}$ form a large family, i.e. they satisfy the following:
\begin{itemize}
\item{$\w_{i}^{n} = \w^{n},\ i\in \{1,\dots,N\}$.}
\item{The forms $\{\w_{1}^{n-1},\dots, \w_{N}^{n-1}\}$ generate the space $\bigwedge^{2n-2}_{x}T^{*}M$ for all $x\in M$.}
\end{itemize}
\end{lemma}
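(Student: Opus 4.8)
The plan is to reduce the statement to a pointwise (linear-algebraic) assertion governed by Lemma \ref{span} and then to globalize by compactness. First I would set $\alpha := \psi^{*}\w$, which is non-degenerate everywhere and satisfies $\alpha^{n} = \psi^{*}\w^{n} = \w^{n}$ since $\psi \in G \subset G_{n}$. Because $\psi^{*}\w \not\equiv \pm\w$, there is a point $p\in M$ at which $\alpha_{p}$ is not a scalar multiple of $\w_{p}$: if $\alpha_{x} = \lambda(x)\w_{x}$ for all $x$, then $\alpha^{n}=\w^{n}$ forces $\lambda(x)^{n}\equiv 1$, so $\lambda\equiv\pm 1$ on the connected $M$ and $\alpha=\pm\w$, a contradiction. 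The forms $\w_{i}$ will be produced as pullbacks $f_{i}^{*}\w$ with $f_{i}\in G$, and any such form automatically satisfies $\w_{i}^{n}=\w^{n}$, so only the spanning condition is at stake. To manufacture many forms at $p$ I would compose $\psi$ with Hamiltonian diffeomorphisms fixing $p$: using a Darboux chart and cut-off quadratic Hamiltonians, every $T\in Sp(T_{p}M,\w_{p})$ is realized as $T=dg_{p}$ for some $g\in\text{Ham}(M,\w)\subset G$ with $g(p)=p$; then $f:=\psi\circ g\in G$ has $(f^{*}\w)_{p}=(dg_{p})^{*}\alpha_{p}=T^{*}\alpha_{p}$, hence $(f^{*}\w^{n-1})_{p}=(T^{*}\alpha_{p})^{n-1}$.

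The heart of the matter is the pointwise claim that the forms $(T^{*}\alpha_{p})^{n-1}$, together with $\w_{p}^{n-1}$, span $\bigwedge^{2n-2}T_{p}^{*}M$ as $T$ ranges over $Sp(T_{p}M,\w_{p})$. Lemma \ref{span} provides the degree-two analogue, so I would upgrade it representation-theoretically. Decompose $\bigwedge^{2}T_{p}^{*}M=\R\,\w_{p}\oplus\bigwedge^{2}_{0}$ into the line spanned by $\w_{p}$ and the primitive part $\bigwedge^{2}_{0}$, which is an irreducible $Sp$-representation; since the span of the $Sp$-orbit of $\alpha_{p}$ is a subrepresentation whose sum with $\R\w_{p}$ is everything, it already contains all of $\bigwedge^{2}_{0}$. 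The $Sp$-equivariant hard Lefschetz isomorphism $L^{n-2}\colon\beta\mapsto\beta\wedge\w_{p}^{n-2}$ carries this decomposition to $\bigwedge^{2n-2}T_{p}^{*}M=\R\,\w_{p}^{n-1}\oplus W'$ with $W'$ irreducible. Now let $R$ be the span of $\{(T^{*}\alpha_{p})^{n-1}: T\in Sp\}$ together with $\w_{p}^{n-1}$; it is an $Sp$-subrepresentation containing $\R\,\w_{p}^{n-1}$, so it is either $\R\,\w_{p}^{n-1}$ or all of $\bigwedge^{2n-2}$, and the former occurs precisely when $\alpha_{p}^{n-1}\in\R\,\w_{p}^{n-1}$ (take $T=\mathrm{id}$, since $T^{*}$ fixes $\w_{p}^{n-1}$). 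Thus everything comes down to showing that $\alpha_{p}^{n-1}$ is \emph{not} proportional to $\w_{p}^{n-1}$. I expect this non-collapsing of the $(n-1)$-st power map to be the main obstacle, and I would settle it by bivector (Poisson) duality: for non-degenerate $\beta$ one has $\gamma\wedge\beta^{n-1}=c\,\Lambda_{\beta}(\gamma)\,\beta^{n}$ for all $\gamma\in\bigwedge^{2}$, where $\Lambda_{\beta}$ is contraction with the inverse bivector $\beta^{-1}$; so $\alpha_{p}^{n-1}=\lambda\w_{p}^{n-1}$ together with $\alpha_{p}^{n}=\w_{p}^{n}$ would force $\Lambda_{\alpha_{p}}=\lambda\Lambda_{\w_{p}}$, hence $\alpha_{p}^{-1}\propto\w_{p}^{-1}$ and $\alpha_{p}\propto\w_{p}$, contradicting the choice of $p$.

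Granting the pointwise claim, finitely many transformations $T_{1},\dots,T_{m}$ already suffice, since the target $\bigwedge^{2n-2}T_{p}^{*}M$ is finite-dimensional; this yields $f_{0}=\mathrm{id},f_{1},\dots,f_{m}\in G$ whose forms $\w_{j}=f_{j}^{*}\w$ satisfy $\w_{j}^{n}=\w^{n}$ and whose $(n-1)$-st powers span $\bigwedge^{2n-2}T_{p}^{*}M$. Spanning is an open condition, so these powers span on a whole neighborhood $V_{p}$ of $p$. To reach every point I would use that $\text{Ham}(M,\w)$ acts transitively on the connected manifold $M$: for any $q$ choose $h_{q}\in\text{Ham}$ with $h_{q}(q)=p$ and replace each $f_{j}$ by $f_{j}\circ h_{q}\in G$; since $h_{q}^{*}$ is a pointwise linear isomorphism on $(2n-2)$-forms and commutes with taking $\w^{n}$, the translated family spans near $q$ and still satisfies the volume condition. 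Finally, extracting a finite subcover of the open cover $\{V_{q}\}_{q\in M}$ of the closed manifold $M$ and taking the union of the corresponding finite families produces a single finite large family, completing the proof.
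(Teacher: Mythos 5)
Your proposal is correct, and while it shares the paper's overall skeleton---fix a point $p$, realize $Sp(T_{p}M,\w_{p})$ by Hamiltonian diffeomorphisms fixing $p$, prove a pointwise spanning statement, then spread it over $M$ using openness of the spanning condition, transitivity of $\text{Ham}(M,\w)$, and compactness---it handles the crucial pointwise step by a genuinely different and, in fact, more complete argument. The paper works entirely in degree $2$: it proves via Lemma \ref{span} that the orbit $G_{0}\cdot(\w_{|_{v_{0}}})$ spans $\bigwedge^{2}T^{*}_{v_{0}}M$, and the bridge to degree $2n-2$ is the unproved sentence ``If one proves that $\bigwedge^{2}T^{*}_{v_{0}}M = \langle G_{0}.(\w_{|_{v_{0}}})\rangle$ then $\bigwedge^{2n-2}T^{*}_{v}M = \langle G_{0}.(\w^{n-1}_{|_{v}})\rangle$.'' That implication is not formal: a spanning set of $2$-forms can consist of decomposable forms, whose $(n-1)$-st powers all vanish once $n\geq 3$, so one must use that the orbit consists of nondegenerate forms whose $n$-th power is $\w^{n}$. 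You supply exactly this missing ingredient. Working directly in degree $2n-2$, you transport the decomposition $\R\,\w_{p}\oplus\bigwedge^{2}_{0}$ through the $Sp$-equivariant hard Lefschetz isomorphism $L^{n-2}$ (part (8) of Theorem \ref{Kahler identities}) to get $\R\,\w_{p}^{n-1}\oplus W'$ with $W'$ irreducible, conclude that the span of the orbit of powers together with $\w_{p}^{n-1}$ is either the line $\R\,\w_{p}^{n-1}$ or all of $\bigwedge^{2n-2}$, and rule out the first case by the identity $\gamma\wedge\beta^{n-1}=c\,\Lambda_{\beta}(\gamma)\,\beta^{n}$ (with $\Lambda_{\beta}(\gamma)$ the contraction of $\gamma$ with the bivector $\beta^{-1}$), which shows that $\alpha_{p}^{n-1}\propto\w_{p}^{n-1}$ combined with $\alpha_{p}^{n}=\w_{p}^{n}$ would force $\alpha_{p}\propto\w_{p}$. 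This non-collapsing of the power map is the real content hidden behind the paper's glossed step, so your route is what makes the lemma's proof airtight. Two small remarks: the irreducibility of $\bigwedge^{2}_{0}$, which you quote as standard, can if desired be deduced from Lemma \ref{span} itself (given a nonzero subrepresentation $U\subset\bigwedge^{2}_{0}$ and $0\neq\beta\in U$, apply the lemma to the nondegenerate form $\beta+t\w_{p}$ for $t\gg 0$ to get $U+\R\,\w_{p}=\bigwedge^{2}$, hence $U=\bigwedge^{2}_{0}$), keeping everything self-contained; and your explicit argument that $\psi^{*}\w$ fails pointwise proportionality at some point (via $\lambda(x)^{n}\equiv 1$ and connectedness) is more careful than the paper's parenthetical appeal to composing with a Hamiltonian map.
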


\begin{proof} 
Define
$$G_{0}:= \{d_{v_{0}}g: T_{v_{0}}M\rightarrow T_{v_{0}}M\ |\ g\in G,\ g(v_{0}) = v_{0}\}$$
where $v_{0}$ is a fixed point in $M$. Since $\text{Ham}(M,\w)$ acts transitively on $M$ so the orbit $G_{0} . (\w_{|_{v_{0}}})$ is equal to the orbit $(G.\w)_{|_{v_{0}}}$, namely if $g\in G$, then there exist an element $h\in \text{Ham}(M,\w)\subset G$ so that $h(g(v_{0})) = v_{0}$. So we have $h\circ g \in G_{0}$ and $(h\circ g)^{*}\w_{|_{v_{0}}} = g^{*}\w_{|_{v_{0}}}$. If one proves that $\bigwedge^{2}T^{*}_{v_{0}}M = \big<G_{0}.(\w_{|_{v_{0}}})\big>$ then $\bigwedge^{2n-2}T^{*}_{v}M = \big<G_{0}.(\w^{n-1}_{|{v}})\big>$ for every $v\in U$ where $U$ is some neighborhood of $v_{0}$. Then by the action of $\text{Ham}(M,\w)$ we would move $U$ and cover $M$ with finitely many open sets for each of which there are finitely many diffeomorphisms in $G$ such that the corresponding pull backed forms form a large family on the corresponding open sets, hence all those diffeomorphims together would form a large family on $M$. Therefore it suffices to prove that $\bigwedge^{2}T^{*}_{v_{0}}M = \big<G_{0}.(\w_{|_{v_{0}}})\big>$. We know that there exist $\phi\in G$ so that $\phi^{*}\w\neq \pm\w$. (Composing with a map from $\text{Ham}(M,\w)$ if necessary) we have $\phi(v_{0}) = v_{0}$ and $(\phi^{*}\w)_{_{v_{0}}} \not\equiv \lambda \w_{_{v_{0}}}$ for any $\lambda\in \R$. By Lemma \ref{span} the orbit of a $2$-form independent from $\w$ under $Sp(2n)$, the group of linear transformations that preserve $\w$, together with $\w$ itself generates $\bigwedge^{2}T^{*}_{v_{0}}M$ and this finishes the proof.
\end{proof}

Let $\{f_{1},\dots,f_{N}\}$ be a set of diffeomorphisms in $G$ for which the set $\{f_{i}^{*}\w\}_{i=1}^{N}$ form a large family. Let $\phi_{1},\dots,\phi_{N}\in G$  so that $f_{i} = \phi_{i}\circ\dots\circ \phi_{1}$, $i = 1,\dots,N$. Consider the following map,
\[D: \text{Symp}(M,\w)\times \dots \times \text{Symp}(M,\w)\rightarrow \text{Diff}(M,\w^{n})\]
\[
(x_{1},\dots,x_{N})\mapsto f_{N}^{-1}\circ x_{N}\phi_{N}\dots x_{1}\phi_{1}
\]
If $X=(X_{1},\dots,X_{N})\in T_{Id}\bigg(\text{Symp}(M,\w)^{\times N}\bigg)$ where $Id = id\times \dots id$, then $$L(X):= dD(X) = \sum_{i=1}^{N}Df_{i}^{-1}(X_{i})$$
If $\{f_{t}\}_{0\leq t\leq 1}\subset \text{Diff}_{0}^{e}(M,\w^{n})$ is a smooth family of maps with $f_{0}=id$ then letting $X_{t} = \partial_{t}f_{t}\circ f_{t}^{-1}$ we get $f_{i}^{*}\w$-exact vector fields $X_{i}$ for $i=1,\dots, N$ such that $X_{t} = \sum_{i=1}^{N}X_{i}$. So we get,
$$L(Df_{1}(X_{1}),\dots, Df_{N}(X_{N})) = X_{t}$$ which implies that $f_{t}\in Im(D_{|_{\text{Ham}(M,\w)^{\times N}}})\subset G$ for $0 \leq t\leq 1$. (Note that $Df_{i}(X_{i})$ is $\w$-exact for all $i$.) Therefore we have $\text{Diff}^{e}_{0}(M,\w^{n})\subset G$ and if $H^{1}(M,\R) = 0$ then $\text{Diff}_{0}(M,\w^{n}) = G$.
\end{proof}

\vspace{0.5cm}
\begin{center}
3. PROOFS
\end{center}

Let us first prove the main theorem using the Maximality Theorem proved above.

\begin{corollary}[of Theorem \ref{maximality theorem}]
Let $(M^{2n},\w)$ be a closed connected symplectic manifold with $n\geq 3$ and let $G_{k}$ be as before. Then we have $G_{k} = \text{Symp}_{0}(M,\w)$ for all $0 < k < n$.
\end{corollary}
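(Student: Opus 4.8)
The plan is to deduce the corollary from the Maximality Theorem (Theorem \ref{maximality theorem}) by a proof by contradiction, the only substantial ingredient being the construction of a suitable exact volume-preserving diffeomorphism. First I would record the easy inclusions. If $\phi \in \text{Symp}_0(M,\w)$ then a symplectic isotopy from the identity to $\phi$ consists of $\w$-preserving, hence $\w^k$-preserving, diffeomorphisms, so $\phi \in G_k$; thus $\text{Symp}_0(M,\w) \subseteq G_k$. As in the introduction's description of $G_k$ as an intermediate subgroup, $G_k$ sits inside $G_n = \text{Diff}_0(M,\w^n)$ (a point I would justify from the pointwise behaviour of $\w^k$-preserving linear maps on the volume form), so $G_k$ is a connected subgroup of $G_n$ containing $\text{Ham}(M,\w)$, the latter already lying in $\text{Symp}_0(M,\w)$. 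It therefore suffices to prove the reverse inclusion $G_k \subseteq \text{Symp}_0(M,\w)$.

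The argument rests on a dichotomy. Either some $\psi \in G_k$ satisfies $\psi^*\w \not\equiv \pm\w$, or every $\psi \in G_k$ satisfies $\psi^*\w \equiv \pm\w$. In the first case $G_k$ meets all the hypotheses of the Maximality Theorem, and in either of its cases (whether or not $H^1(M,\R)$ vanishes) one obtains $\text{Diff}_0^e(M,\w^n) \subseteq G_k$. In the second case a connectedness argument finishes the proof directly: for $\psi \in G_k$ choose a path $\{\psi_t\}$ in $\text{Diff}(M,\w^k)$ with $\psi_0 = \text{id}$ and $\psi_1 = \psi$; each $\psi_t$ lies in $G_k$, so $\psi_t^*\w = \epsilon(t)\,\w$ with $\epsilon(t) \in \{\pm 1\}$, the sign being global since $M$ is connected. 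As $t \mapsto \epsilon(t)$ is continuous with values in a discrete set and $\epsilon(0) = 1$, we get $\epsilon \equiv 1$, so the whole path is symplectic and $\psi \in \text{Symp}_0(M,\w)$. Hence in the second case $G_k = \text{Symp}_0(M,\w)$, and it only remains to rule out the first case.

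This is where the \emph{main obstacle} lies: I must exhibit an element of $\text{Diff}_0^e(M,\w^n)$ lying outside $G_k$, contradicting $\text{Diff}_0^e(M,\w^n) \subseteq G_k$. The construction is local. Fix a Darboux ball $B$ with coordinates $(x_1,y_1,\dots,x_n,y_n)$ in which $\w = \sum_i dx_i\wedge dy_i$, and choose rates $c = (c_1,\dots,c_n)$ with $\sum_i c_i = 0$ but $c_1 + \dots + c_k \neq 0$, for instance $c = (1,-1,0,\dots,0)$. The linear map $A_s$ sending $(x_i,y_i) \mapsto (e^{s c_i}x_i, y_i)$ then satisfies $A_s^*\w^n = e^{s\sum_i c_i}\w^n = \w^n$, so it is volume-preserving, while the coefficient $e^{s(c_1 + \dots + c_k)}$ of $dx_1\wedge dy_1\wedge\dots\wedge dx_k\wedge dy_k$ in $A_s^*\w^k$ is not $1$, so $A_s^*\w^k \neq \w^k$. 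I would then cut this flow off to a diffeomorphism $h_s$ of $M$ supported in $B$, equal to $A_s$ near the centre and to the identity near $\partial B$, and restore volume-preservation with Moser's trick: the forms $h_s^*\w^n$ and $\w^n$ coincide near the centre and outside $B$ and enclose equal volume, so the correction can be taken supported in the interpolation annulus, leaving $h_s = A_s$ near the centre.

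Finally I would verify $h_s \in \text{Diff}_0^e(M,\w^n)$. Its generating field $X_s$ is supported in $B$ and divergence-free, so $\iota_{X_s}\w^n$ is a closed $(2n-1)$-form supported in $B$; pairing it with any closed $1$-form $\alpha$, writing $\alpha|_B = df$ and using $d\iota_{X_s}\w^n = \mathcal{L}_{X_s}\w^n = 0$ together with compact support, an integration by parts gives $\int_M \iota_{X_s}\w^n \wedge \alpha = 0$, so $[\iota_{X_s}\w^n] = 0$ in $H^{2n-1}(M,\R)$ and $h_s$ indeed lies in $\text{Diff}_0^e(M,\w^n)$. Since $h_s$ agrees with $A_s$ near the centre, $h_s^*\w^k \neq \w^k$ there, so $h_s \notin G_k$ — the contradiction that eliminates the first case and yields $G_k = \text{Symp}_0(M,\w)$. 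I expect the delicate point to be precisely this construction, in particular arranging the Moser correction so that it does not disturb the non-$\w^k$-preserving behaviour near the centre; by contrast, the reduction through the Maximality Theorem and the connectedness dichotomy should be routine.
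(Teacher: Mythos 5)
Your overall strategy is the same as the paper's: reduce via the Maximality Theorem to a dichotomy (either $\text{Diff}_0^e(M,\w^n)\subseteq G_k$, or every $\psi\in G_k$ has $\psi^*\w\equiv\pm\w$, in which case your connectedness argument on the isotopy -- which the paper leaves implicit -- forces the sign $+1$), and then kill the first alternative with a compactly supported, exact, volume-preserving diffeomorphism built from an anisotropic linear scaling in a Darboux ball. However, two of your concrete steps fail as written. First, your sample rates $c=(1,-1,0,\dots,0)$ violate your own requirement: $c_1+\dots+c_k=0$ for every $k\geq 2$, so the coefficient of $dx_1\wedge dy_1\wedge\dots\wedge dx_k\wedge dy_k$ in $A_s^*\w^k$ is exactly $1$, not $\neq 1$ as claimed. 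The requirement itself is satisfiable -- take $c=(1,0,\dots,0,-1)$, which works simultaneously for all $0<k<n$ and is in effect the paper's choice $(x,y)\mapsto(ax_1,\dots,ax_n,ay_1,\dots,ay_{n-1},a^{-2n+1}y_n)$ -- or keep your $c$ and look instead at the coefficient of $dx_1\wedge dy_1\wedge dx_3\wedge dy_3\wedge\dots\wedge dx_{k+1}\wedge dy_{k+1}$, which is $e^{s}\neq 1$. Second, the inclusion $G_k\subseteq G_n$ cannot be justified ``from the pointwise behaviour of $\w^k$-preserving linear maps on the volume form'': that pointwise implication is false. The anti-symplectic map $T(x_i,y_i)=(y_i,x_i)$ satisfies $T^*\w=-\w$, hence $T^*\w^k=\w^k$ for even $k$ while $T^*\w^n=-\w^n$ for odd $n$ (e.g.\ $k=2$, $n=3$, which is within the range of the corollary). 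The correct argument -- the one the paper gives -- uses the isotopy: differentiating $f_t^*\w^k=\w^k$ yields $\mathcal{L}_{X_t}\w\wedge\w^{k-1}=0$, wedging with $\w^{n-k}$ gives $\mathcal{L}_{X_t}\w\wedge\w^{n-1}=0$, hence $\mathcal{L}_{X_t}\w^n=0$ and every $f_t$ preserves $\w^n$. Both repairs are short, but without them the two steps do not stand.

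One further difference is worth noting, since it concerns the point you yourself flag as delicate. You cut off the flow itself and then invoke a Moser correction in the annulus to restore volume-preservation, followed by a Poincar\'e duality argument for exactness; this can be made to work, but it forces you to check that the Moser map depends smoothly on $s$ and stays supported away from the centre. The paper avoids Moser entirely by cutting off at the level of the primitive: on the chart one writes $\iota_X\w^n=d\eta$, multiplies $\eta$ (not the flow) by the bump function $\phi$, and defines $Y$ by $\iota_Y\w\wedge\w^{n-1}=d(\phi\eta)$. The flow of $Y$ is then automatically volume-preserving and exact, since its contraction with $\w^n$ is exact by construction, and it still agrees with the linear scaling near the origin. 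Adopting this would eliminate the most fragile part of your construction.
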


\begin{proof}
It is obvious that $\text{Ham}(M,\w)\subset G_{k}$. Let $f \in G_{k}$. By definition there exist a homotopy $\{f_{t}\}_{0\leq t\leq 1}\subset \text{Diff}(M,\w^{k})$ with $f_{0} = id$ and $f_{1} = f$. Differentiating $f_{t}^{*}\w^{k} = \w^{k}$ we get that $\mathcal{L}_{X_{t}}\w^{k} = 0$ where $X_{t} = \partial_{t}f_{t}\circ f_{t}^{-1}$.

$$\implies \mathcal{L}_{X_{t}}\w \wedge \w^{k-1} = 0\implies \mathcal{L}_{X_{t}}\w \wedge \w^{n-1} = 0$$ 
$$\implies \mathcal{L}_{X_{t}}\w^{n} = 0$$
so $f_{t}$ is volume preserving for all $t$, in particular so is $f = f_{1}$. Thus by the Maximality Theorem either $\text{Diff}_{0}^{e}(M,\w^{n})\subset G_{k}$ or $G_{k} \subset \text{Symp}_{0}(M,\w)$, depending on whether there is an element $\psi\in G$ so that $\psi^{*}\w\not\equiv \lambda \w$. We shall prove that the case $\text{Diff}_{0}^{e}(M,\w^{n})\subset G_{k}$ does not happen by constructing a diffeomorphism in $\text{Diff}_{0}^{e}(M,\w^{n})$ that does not preserve $\w^{k}$ for $0 < k < n$.
Let $U$ be a Darboux chart with local coordinate functions $(x_{1},\dots,x_{n},y_{1},\dots,y_{n})$. Define $f^{a}:\R^{2n}\rightarrow \R^{2n}$ by
$$(x,y)\mapsto (ax_{1},\dots,ax_{n},ay_{1},\dots,ay_{n-1},a^{-2n + 1}y_{n})$$
where $a > 1$ is a fixed real number.

Considering the path $\{f_{t}:= f^{a^{t}}\}_{0\leq t\leq 1}$ which is exact since $\R^{2n}$ is a contractible space we have that $f\in \text{Diff}_{0}(\R^{2n},\w^{n})$. So there exist a $(2n-2)$-form $\eta$ s.t. $$\iota_{X}\w\wedge \w^{n-1} = d\eta$$ 
where $X = {\partial_{t}}_{|_{t=0}}f_{t}$. Let $\phi: U\rightarrow \R$ be a smooth cut off function such that $\phi_{|_{\overline{B_{\delta}}}}\equiv 1$, $\phi_{|_{B_{2\delta}^{c}}}\equiv 0$ for a small enough $\delta > 0$. Consider the unique vector field $Y$ on $U$ defined by $\iota_{Y}\w\wedge \w^{n-1} = d(\phi\eta)$. Let $g_{t}$ be the flow of $Y$ extended by identity to the entire $M$. Then $g_{t}$ does not preserve $\w^{k}$ for all $0 < k < n$ and $t> 0$, since near the origin $g_{t}\equiv f_{t}$ and,
$$\w^{n-1}_{|_{0}}(e_{1},e_{1}',\dots,e_{n-1},e_{n-1}') = (-1)^{n}(n-1)!$$
$$f_{t}^{*}\w^{n-1}_{|_{0}}(e_{1},e_{1}',\dots,e_{n-1},e_{n-1}') = (-1)^{n}(n-1)!a^{t(2n-2)}$$
where $e_{i} = \frac{\partial}{\partial x_{i}},e_{i}' = \frac{\partial}{\partial y_{i}}$. So $g_{t}$ for all $t$ do not preserve the form $\w^{k}$ for $0 < k < n$ but is volume preserving.
\end{proof}

Using the Maximality theorem we proved the main result for closed symplectic manifolds, but in fact it holds for all symplectic manifolds as we will prove it now using some elementary methods. 

\begin{theorem}[Main theorem]

Let $(M^{2n},\w)$ be a symplectic manifold and the groups $G_{k},\ k=1,\dots,n$ defined as before. Then for $n > 2$ and $0 < k < n$ we have $G_{k} = G_{1}$.
\end{theorem}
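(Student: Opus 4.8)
The plan is to reduce the group-theoretic statement to a pointwise question in linear algebra and then settle that question by Lefschetz-type representation theory; unlike the Maximality-theorem route, everything here is local, so no closedness or compactness of $M$ is needed. First observe the easy inclusion $G_1\subset G_k$: if $f\in G_1=\text{Symp}_0(M,\w)$ is joined to the identity by a path $f_t$ of symplectomorphisms, then each $f_t$ satisfies $f_t^*\w^k=(f_t^*\w)^k=\w^k$, so the same path exhibits $f\in G_k$. For the reverse inclusion $G_k\subset G_1$ I would take $f\in G_k$ together with a smooth path $\{f_t\}_{0\le t\le 1}\subset \text{Diff}(M,\w^k)$ with $f_0=\text{id}$, $f_1=f$, and set $X_t=\partial_t f_t\circ f_t^{-1}$. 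Differentiating $f_t^*\w^k=\w^k$ gives $\mathcal{L}_{X_t}\w^k=0$, and since
\[
\mathcal{L}_{X_t}\w^k=k\,\w^{k-1}\wedge \mathcal{L}_{X_t}\w,
\]
writing $\beta_t:=\mathcal{L}_{X_t}\w$ we obtain $\w^{k-1}\wedge\beta_t=0$ for every $t$. The goal is to conclude $\beta_t\equiv 0$, since then $\partial_t(f_t^*\w)=f_t^*\mathcal{L}_{X_t}\w=0$ forces $f_t^*\w=\w$ for all $t$, so each $f_t$ is a symplectomorphism and $f=f_1\in G_1$.

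Everything now reduces to the following pointwise claim, to be applied to $V=T_xM$ with $\beta=(\beta_t)_x$ at each point $x$ and each $t$: for a symplectic vector space $(V,\w)$ of dimension $2n$ and $1\le k\le n-1$, the Lefschetz map
\[
L^{k-1}:{\bigwedge}^2 V^*\lra {\bigwedge}^{2k}V^*,\qquad \beta\mapsto \w^{k-1}\wedge\beta,
\]
is injective. Granting this, the identity $\w^{k-1}\wedge\beta_t=0$ forces $\beta_t=0$ at every point, which closes the argument above. The hypothesis $0<k<n$ is precisely what yields $k-1\le n-2$, which (as below) is the range in which injectivity holds; for $k=1$ the claim is vacuous, explaining why the case $n=2$ is tautological, and for $k=n$ the map $L^{n-1}\colon\bigwedge^2V^*\to\bigwedge^{2n}V^*$ is far from injective, consistent with $G_n\neq G_1$ in general.

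I would prove the linear-algebra claim via the K\"ahler (Lefschetz) identities. Let $L=\w\wedge(\cdot)$, let $\Lambda$ be its adjoint with respect to the inner product on $\bigwedge^\bullet V^*$ induced by a compatible metric, and let $H$ be the degree operator; these satisfy the $\mf{sl}_2$ relations $[H,L]=2L$, $[H,\Lambda]=-2\Lambda$, $[L,\Lambda]=H$, so $\bigwedge^\bullet V^*$ becomes a finite-dimensional $\mf{sl}_2$-representation. The Hard Lefschetz property of this representation gives that $L^{n-p}\colon\bigwedge^p V^*\to\bigwedge^{2n-p}V^*$ is an isomorphism; in particular $L^{n-2}\colon\bigwedge^2 V^*\to\bigwedge^{2n-2}V^*$ is injective. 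Since $k\le n-1$ we have $n-1-k\ge 0$, so multiplying the hypothesis by $\w^{\,n-1-k}$ gives
\[
\w^{n-2}\wedge\beta=\w^{\,n-1-k}\wedge\big(\w^{k-1}\wedge\beta\big)=0,
\]
and injectivity of $L^{n-2}$ then yields $\beta=0$.

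As the alternative ``inductive'' route, one can avoid invoking Hard Lefschetz as a black box and instead argue by induction on $n$: fixing a symplectic basis, decompose $\beta$ according to a $\w$-orthogonal splitting $V=V'\oplus\langle e_n,f_n\rangle$ with $V'$ symplectic of dimension $2(n-1)$, expand $\w^{k-1}\wedge\beta$ in this splitting, and read off from the surviving top components on $V'$ that the corresponding pieces of $\beta$ vanish, reducing to the same statement one dimension lower. I expect the main obstacle to sit here, in the linear-algebra lemma rather than in the geometric reduction: the K\"ahler-identities proof is short once the $\mf{sl}_2$-structure and Hard Lefschetz are in hand, whereas making the inductive bookkeeping of the primitive decomposition of $\beta$ fully rigorous---in particular controlling exactly which weight components can survive when $k-1$ is pushed toward its maximal value---is the delicate point that the degree bound $k\le n-1$ is designed to manage.
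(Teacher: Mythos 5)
Your proposal is correct and takes essentially the same route as the paper: the identical reduction of $G_k\subset G_1$ to the pointwise linear-algebra claim that $\alpha\mapsto \omega^{k-1}\wedge\alpha$ is injective on $2$-forms, settled via the $\mathfrak{sl}_2$/Hard Lefschetz structure (the paper's K\"ahler-identities proof of its Lemma), with the induction on $n$ that you sketch as an alternative being precisely the paper's other proof of the same lemma. If anything, you spell out a small step the paper leaves implicit, namely wedging with $\omega^{\,n-1-k}$ to pass from $\omega^{k-1}\wedge\beta=0$ to $\omega^{\,n-2}\wedge\beta=0$ before invoking bijectivity of $L^{n-2}$.
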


\begin{proof}
Let $f\in G_{k}$. By definition there exist a smooth family of diffeomorphisms $f_{t}\in\text{Diff}_{0}(M,\w^{k}),\ t\in [0,1],$ so that $f_{0} = id$ and $f_{1} = f$. So for all $t$ we have $f_{t}^{*}\w^{k} = \w^{k}$. Differentiating the equation we get that,
$$0 = \partial_{t}f_{t}^{*}\w^{k} = f_{t}^{*}\mathcal{L}_{X_{t}}\w^{k}$$
$$\implies \mathcal{L}_{X_{t}}\w \wedge \w^{k-1} = 0.$$
To prove the theorem it is enough to prove that for $n > \max\{k,2\}$ the following map is injective,
$$\Omega^{2}(M)\overset{\w^{k-1}\wedge}\longrightarrow \Omega^{2k}(M).$$
This is a linear algebra problem. The following lemma will finish the proof.
\end{proof}

\begin{lemma}
\label{lemma}
Let $(V,\w)$ be a $2n$-dimensional real symplectic vector space, $n\geq 3$. Then the map $L:\bigwedge^{2}V\rightarrow \bigwedge^{2k}V$ defined by $\alpha\mapsto \w^{k-1}\wedge \alpha$ is injective for $0 < k < n$. 
\end{lemma}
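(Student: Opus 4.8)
The plan is to recognize Lemma \ref{lemma} as the $p=2$ case of the hard Lefschetz property for the symplectic vector space $(V,\w)$, and to reduce the variable exponent $k-1$ to the single ``top'' exponent $n-2$. Throughout, write $L_{\w}\colon \bigwedge^{\bullet}V \to \bigwedge^{\bullet + 2}V$ for the degree-raising operator $\beta \mapsto \w\wedge\beta$, so that the map $L$ appearing in the statement is exactly $L_{\w}^{\,k-1}$ restricted to $\bigwedge^{2}V$.

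The decisive first step is a trivial factorization. For $0 < k < n$ both $k-1\ge 0$ and $n-k-1\ge 0$, and $(k-1)+(n-k-1)=n-2$, so on $\bigwedge^{2}V$ we have
$$L_{\w}^{\,n-2} = L_{\w}^{\,n-k-1}\circ L_{\w}^{\,k-1}.$$
Hence if $L_{\w}^{\,k-1}\alpha = 0$ for some $\alpha\in\bigwedge^{2}V$, then also $L_{\w}^{\,n-2}\alpha = 0$; a composite being injective forces its inner factor to be injective. Therefore it suffices to prove that the single map $L_{\w}^{\,n-2}\colon \bigwedge^{2}V \to \bigwedge^{2n-2}V$ is injective. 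Since $\dim\bigwedge^{2}V = \binom{2n}{2} = \binom{2n}{2n-2} = \dim\bigwedge^{2n-2}V$, injectivity here is equivalent to $L_{\w}^{\,n-2}$ being an isomorphism, and the whole lemma reduces to this one instance of hard Lefschetz.

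To establish it I would make the standard $\mathfrak{sl}_{2}$ structure on $\bigwedge^{\bullet}V$ explicit. Let $\Lambda$ be the adjoint of $L_{\w}$ (either contraction with the bivector dual to $\w$, or the Hodge adjoint of $L_{\w}$ with respect to the inner product attached to any $\w$-compatible complex structure), and let $H$ act on $\bigwedge^{p}V$ as the scalar $p-n$. The key identity to verify is $[L_{\w},\Lambda]=H$, which together with the immediate relations $[H,L_{\w}]=2L_{\w}$ and $[H,\Lambda]=-2\Lambda$ makes $(L_{\w},\Lambda,H)$ a representation of $\mathfrak{sl}_{2}(\R)$ on $\bigwedge^{\bullet}V$. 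Granting this, finite-dimensional $\mathfrak{sl}_{2}$ representation theory decomposes $\bigwedge^{\bullet}V$ into weight strings symmetric about the middle degree $n$, and on each string the raising operator $L_{\w}$ carries the weight space in degree $p\le n$ isomorphically onto the one in degree $2n-p$ after $n-p$ applications. In particular $L_{\w}^{\,n-2}$ is an isomorphism $\bigwedge^{2}V \to \bigwedge^{2n-2}V$, as required.

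The main obstacle is exactly the commutation relation $[L_{\w},\Lambda]=H$; everything surrounding it is either formal ($\mathfrak{sl}_{2}$ theory) or an elementary dimension count. I expect to discharge it in one of two ways, matching the two routes advertised in the introduction: either by the pointwise K\"ahler identities after fixing an $\w$-compatible Hermitian structure, so that $\Lambda$ is the genuine adjoint of $L_{\w}$ and the relation is the classical one; or, avoiding representation theory entirely, by induction on $n$, splitting off a symplectic plane $V = V' \oplus P$ with $\w = \w' + \sigma$, expanding $\bigwedge^{2}V$ and $\bigwedge^{2n-2}V$ according to how many tensor factors are taken from $P$, and reducing injectivity of $L_{\w}^{\,n-2}$ to the corresponding statement for the $2(n-1)$-dimensional space $(V',\w')$. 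The hypothesis $n\ge 3$ plays no role in the argument beyond making the claim non-vacuous: for $n=2$ the only admissible exponent is $k=1$, where $L = L_{\w}^{\,0}$ is the identity.
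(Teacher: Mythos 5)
Your proposal is correct and follows essentially the same route as the paper's second proof: hard Lefschetz for $\bigwedge^{2}V$ via the $\mathfrak{sl}_{2}$-triple $(L,\Lambda,H)$ and the K\"ahler identities, which is exactly part (8) of Theorem \ref{Kahler identities} with $k=2$. Your explicit factorization $L_{\w}^{\,n-2}=L_{\w}^{\,n-k-1}\circ L_{\w}^{\,k-1}$ merely spells out the reduction step that the paper's one-line deduction of Lemma \ref{lemma} from that theorem leaves implicit.
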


\begin{proof}
We will be presenting two approaches. The first approach is a simple induction on the dimension of the vector space which we shall do it here and the second approach is presented in the next section on K\"ahler identities. Let $\{x_{1},\dots,x_{n},y_{1},\dots,y_{n}\}$ be the standard Darboux coordinates and $\alpha \in \bigwedge^{2}V$. Suppose $\w^{k-1} \wedge \alpha = 0$ for a fixed $0 < k < n$. We prove the statement by induction on $n$. First let $n = 3$. Then $k = 1,2$. For $k = 1$ there is nothing to prove. For $k = 2$ we have
$$\big(\sum_{i} dx_{i}\wedge dy_{i}\big)\wedge \big(\sum_{i < j} a_{ij}dx_{i}\wedge dx_{j} + \sum_{i,j} b_{ij}dx_{i}\wedge dy_{j} + \sum_{i < j}c_{ij}dy_{i}\wedge dy_{j}\big) = 0$$
where inside the first parentheses is $\w$ and the second parentheses is $\alpha$. There are three types of terms appearing in the wedge product. 
\begin{itemize}
\item{$D dx_{i}\wedge dx_{j}\wedge dx_{k}\wedge dy_{r}$: Such a term can only appear if $r\in \{i,j,k\}$. W.l.o.g assume $i = r$. The only possibility for generating such a term is taking $dx_{r}\wedge dy_{r}$ from the first parentheses and $dx_{j}\wedge dx_{k}$ from the second one. So $a_{jk} = 0$ for all $j < k$.}

\item{$D dx_{i}\wedge dy_{j}\wedge dy_{k}\wedge dy_{l}$: Here we get that $c_{ij} = 0$ for all $i < j$ by the same argument.}

\item{$D dx_{i}\wedge dx_{j}\wedge dy_{k}\wedge dy_{l}$: We must have $\{i,j\}\cap \{k,l\}\neq \emptyset$. W.l.o.g assume that $i = k$. If $j\neq l$ then $dx_{k}\wedge dx_{j}\wedge dy_{k}\wedge dy_{l}$ can only happen once so $b_{jl} = 0$. So we may assume the term looks like $Ddx_{k}\wedge dy_{k}\wedge dx_{l}\wedge dy_{l}$. The coefficient of this term in the product is $b_{ll} + b_{kk}$ so $b := b_{ll} = -b_{kk}$ for all $k \neq l$.}
\end{itemize}

Therefore since $n = 3$, there are three sets of coordinates $\{x_{i},y_{i}\}$, $i=1,2,3$ thus we have $b_{11} = -b_{22} = -(-b_{33}) = -(-(-b_{11})) = -b_{11}$ and we get $b = b_{11} = 0$ which means $\alpha = 0$. Now let us assume the statement for $M = (\R^{2n-2},\w_{std})$. We shall prove it for $M = (\R^{2n},\w:= \w_{std})$ and $0 < k < n$ where $n \geq 4$. For $k =1$ the statement is obvious, so assume $k\geq 2$. We have 
$$\w^{k-1} = (-1)^{k}(k-1)!\sum_{1\leq i_{1}\leq \dots\leq i_{k-1}\leq n} dx_{i_{1}}\wedge \dots\wedge dx_{i_{k-1}}\wedge dy_{i_{1}}\wedge \dots \wedge dy_{i_{k-1}}$$
and let 
$$\alpha = \sum_{i < j} a_{ij}dx_{i}\wedge dx_{j} + \sum_{i,j} b_{ij}dx_{i}\wedge dy_{j} + \sum_{i < j}c_{ij}dy_{i}\wedge dy_{j}$$

For a set of indices $I = \{i_{1} < \dots < i_{k-1}\}$ denote $dx_{i_{1}}\wedge \dots\wedge dx_{i_{k-1}}$ by $dx_{I}$ and $dy_{i_{1}}\wedge \dots\wedge dy_{i_{k-1}}$ by $dy_{I}$. Let $i < j$ be two arbitrary indices. Note that since $k - 1 \leq n - 2$ there is a set of indices $I_{ij}$ with $\#I_{ij} = k-1$ and $i,j\notin I_{ij}$. So the following term appears in the product $\alpha\wedge \w^{k-1}$
$$a_{ij}dx_{i}\wedge dx_{j}\wedge dx_{I_{ij}}\wedge dy_{I_{ij}}.$$
(unless $a_{ij} = 0$ which we are aiming to prove.) But such a term can only appear once (with the coefficient $a_{ij}$) since for instance if $dx_{i}$ had come from the term $\w^{k-1}$ then $dy_{i}$ would have appeared too, so we have $a_{ij} = 0$. The same argument shows that $c_{ij} = 0$ for all $i < j$. So we have 
$$\alpha = \sum_{i,j} b_{ij}dx_{i}\wedge dy_{j}.$$
Now if $r,s$ are two distinct indices then if we consider the term 
$$b_{rs}dx_{r}\wedge dy_{s}\wedge dx_{I}\wedge dy_{I}$$
where $I$ is a set of indices so that $r,s\notin I$ then such a term also appears once because if for instance $J$ is a set of indices that includes $r$ then $dy_{r}$ will appear too in the product $b_{-s}dx_{-}\wedge dy_{s}\wedge dx_{J}\wedge dy_{J}$ which will give us a different form.

Thus we have reduced $\alpha$ to the following form
$$\alpha = \sum_{i}b_{ii}dx_{i}\wedge dy_{i}$$ 
Let $X:= \frac{\partial}{\partial x_{1}}$. Then we have 
$$0 = \iota_{X}(\alpha \wedge \w^{k-1}) = \iota_{X}\alpha \wedge \w^{k-1} + \alpha\wedge \iota_{X}\w^{k-1}$$
$$= b_{11}dy_{1}\wedge \w^{k-1} + (k-1)\alpha \wedge \iota_{X}\w \wedge \w^{k-2}$$
$$= b_{11}dy_{1}\wedge \w^{k-1} + (k-1)\ dy_{1}\wedge\alpha \wedge \w^{k-2}$$
$$= dy_{1}\wedge\big(b_{11}\w + (k-1)\alpha\big)\wedge \w^{k-2}$$
If we write $\w^{k-2} = dy_{1}\wedge (\dots) + \w_{1}$ and $b_{11}\w + (k-1)\alpha = dy_{1}\wedge(\dots) + \alpha_{1}$ where $\w_{1}$ and $\alpha_{1}$ have no $dy_{1}$ factor then we shall have 
$$\alpha_{1}\wedge \w_{1} = 0.$$
Note that here $\alpha_{1}$ is a two form that consists of factors $dx_{2},\dots, dx_{n},dy_{2},\dots, dy_{n}$ and it is also clear that
$$\w_{1} = (\sum_{i=2}^{n}dx_{i}\wedge dy_{i})^{\wedge (k-2)}.$$
Since $k -1 < n - 1$ and $n-1 \geq 3$ so by induction hypothesis we have $\alpha_{1} = 0$. This gives us the following equality,
$$b_{11} = -(k-1)b_{ii} \ \ \ \ \forall i\neq 1.$$
If we had contracted the form $\alpha \wedge \w^{k-1}$ by $\frac{\partial}{\partial x_{i}}$ instead of $X = \frac{\partial}{\partial x_{1}}$ we would have got the following through the exact same argument 
$$b_{ii} = -(k-1)b_{jj} \ \ \ \ \forall j\neq i$$

And finally since $n\geq 3$,
$$b_{11} = -(k-1)b_{22} = (k-1)^{2}b_{33} = -(k-1)^{3}b_{11}$$
so $b_{11} = 0$ and therefore $b_{ii} = 0$ for all $i$ and consequently $\alpha = 0$.
\end{proof}

To present a second proof of Lemma \ref{lemma} we start by recalling few definitions.

\begin{definition}
Let $V$ be a finite dimensional real vector space. An automorphism $J:V\rightarrow V$ is called an almost complex structure on $V$ if $J\circ J = -id$. 
\end{definition}

\begin{definition}
An almost complex structure on an finite dimensional inner product space $(V,g)$ is called $g$-compatible if it is an isometry with respect to $g$, i.e.
$$g(J\cdot,J\cdot) = g(\cdot,\cdot).$$
\end{definition}
\begin{definition}
Let $(V^{2n},g,J)$ be a finite dimensional real vector space equipped with an inner product $g$ and a $g$-compatible almost complex structure $J$. Define $L,\Lambda, H\in End(\bigwedge^{*}V^{*})$ as follows
\begin{itemize}
\item{$L(\alpha):= \w \wedge \alpha$ where $\w := g(J\cdot,\cdot)$ and $\alpha \in \bigwedge^{*}V^{*}$}
\item{$\Lambda$ is the adjoint of $L$ with respect to $g$}
\item{$H_{|_{\bigwedge^{k}}}:= (k-n)id$ for all $k\geq 0$.}
\end{itemize}
\end{definition}

\begin{definition}
Let $(V^{2n},g,J)$ be a finite dimensional real vector space as before. Define $vol\in \bigwedge^{2n}V^{*}$ to be a volume form that defines the same orientation as does $J$, and it evaluates $1$ on the orthonormal oriented basis w.r.t $g$. Then define the Hodge $*$-operator $*: \bigwedge^{k}V^{*}\rightarrow \bigwedge^{2n-k}V^{*}$ by the equation $\alpha\wedge *\beta = g(\alpha, \beta)vol$, $\forall\ \alpha\in \bigwedge^{k}V^{*}$. Here $g$ is an induced inner product on the space of higher exterior products which is denoted by the same letter $g$.
\end{definition}

\begin{theorem}\cite[Prop.1.2.30]{complexgeometry}
\label{Kahler identities}
Let $(V,g,J)$ be a finite dimensional real vector space equipped with an inner product $g$ and a $g$-compatible almost complex structure $J$. Then the following holds,

\begin{enumerate}
\item $(*\circ *)_{|_{\bigwedge^{k}}} = (-1)^{k}$.
\item $\Lambda = *^{-1}\circ L\circ *$.
\item $[H,L] = 2L,\ \ \ [H,\Lambda] = -2\Lambda, \ \ \ [L,\Lambda] = H$.
\item $[L^{i},\Lambda](\alpha) = i(k - n + i - 1)L^{i-1}(\alpha)\ \ \forall \alpha \in {\bigwedge}^{k}V^{*}$.
\item There is a direct decomposition of the form 
$${\bigwedge}^{k}V^{*} = \oplus_{i\geq 0}L^{i}(P^{k - 2i})$$
where $P^{k}:= \ker(\Lambda)\cap\bigwedge^{k}V^{*}$.
\item for $k > n$ we have $P^{k} = 0$
\item The map $L^{n-k}: P^{k}\rightarrow \bigwedge^{2n - k}V^{*}$ is injective for $k \leq n$.
\item The map $L^{n-k}: \bigwedge^{k}V^{*}\rightarrow \bigwedge^{2n - k}V^{*}$ is bijective for $k \leq n$.
\label{lastpart}
\end{enumerate}
\end{theorem}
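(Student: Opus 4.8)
The plan is to observe that items (3)--(8) together say precisely that $L$, $\Lambda$ and $H$ make the graded space $\bigwedge^{*}V^{*}$ into a finite-dimensional representation of the Lie algebra $\mathfrak{sl}_{2}(\R)$, after which the structural statements (5)--(8) become formal consequences of the representation theory of $\mathfrak{sl}_{2}$. So I would first establish (1), (2) and (3) by direct computation, and then feed (3) into the representation-theoretic machine. Throughout I would work in a $g$-orthonormal basis $e^{1},f^{1},\dots,e^{n},f^{n}$ of $V^{*}$ adapted to $J$ (with $f^{i}=e^{i}\circ J$), so that $\w=\sum_{i}e^{i}\wedge f^{i}$.

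For (1) I would compute $*\,*$ on a decomposable basis $k$-form: on an inner product space of dimension $m=2n$ one gets $*\,*=(-1)^{k(m-k)}$ on $\bigwedge^{k}V^{*}$, and since $m$ is even $k(m-k)\equiv k^{2}\equiv k\pmod 2$, which yields $(-1)^{k}$. For (2) I would simply unwind the definitions of $*$ and of the adjoint: for $\alpha\in\bigwedge^{k-2}V^{*}$ and $\beta\in\bigwedge^{k}V^{*}$, using $\alpha\wedge *\gamma=g(\alpha,\gamma)\,vol$ together with the fact that $\w$ has even degree,
\[
g(L\alpha,\beta)\,vol=\w\wedge\alpha\wedge *\beta=\alpha\wedge L(*\beta),
\]
and comparing with $g(\alpha,\Lambda\beta)\,vol=\alpha\wedge *(\Lambda\beta)$ forces $*\Lambda\beta=L*\beta$, i.e. $\Lambda=*^{-1}L*$. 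The two degree relations in (3), namely $[H,L]=2L$ and $[H,\Lambda]=-2\Lambda$, are immediate because $L$ (resp. $\Lambda$) raises (resp. lowers) the degree, hence the $H$-eigenvalue, by $2$.

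The one genuinely computational point, and the step I expect to be the \emph{main obstacle}, is the key bracket $[L,\Lambda]=H$. Here I would exploit the orthogonal splitting of $V^{*}$ into the $J$-invariant planes $\langle e^{i},f^{i}\rangle$, which induces a tensor factorization $\bigwedge^{*}V^{*}\cong\bigotimes_{i=1}^{n}\bigwedge^{*}\langle e^{i},f^{i}\rangle$ under which $L=\sum_{i}L_{i}$ and $\Lambda=\sum_{i}\Lambda_{i}$, where $L_{i}$ wedges with $e^{i}\wedge f^{i}$ and $\Lambda_{i}$ is its adjoint, each acting only on the $i$-th factor. Since $e^{i}\wedge f^{i}$ has even degree, operators attached to distinct factors commute, so $[L,\Lambda]=\sum_{i}[L_{i},\Lambda_{i}]$, and each summand reduces to the four-dimensional $n=1$ case on $\bigwedge^{*}\langle e^{i},f^{i}\rangle$, where $[L_{i},\Lambda_{i}]=H_{i}$ is verified by hand on the basis $1,\,e^{i},\,f^{i},\,e^{i}\wedge f^{i}$. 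Summing over $i$ and noting that on a pure tensor of total degree $k$ one has $\sum_{i}H_{i}=k-n$ gives $[L,\Lambda]=H$.

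With (3) in hand, (4) follows by an induction on $i$ that repeatedly applies $[L,\Lambda]=H$ and $[H,L]=2L$, a purely formal manipulation inside the enveloping algebra. Finally, for (5)--(8) I would invoke the classification of finite-dimensional $\mathfrak{sl}_{2}$-representations: identifying $(L,\Lambda,H)$ with the standard triple $(e,f,h)$, the space $\bigwedge^{*}V^{*}$ decomposes into irreducible strings, and the primitive forms $P^{k}=\ker\Lambda\cap\bigwedge^{k}V^{*}$ are exactly the lowest-weight vectors; reading off the string structure over each primitive degree gives the Lefschetz decomposition (5). Since $H$ acts on $\bigwedge^{k}V^{*}$ as $k-n$ and any lowest weight is $\le 0$, a nonzero $P^{k}$ forces $k\le n$, which is (6). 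Within the irreducible string generated by a primitive class in $P^{k}$ (lowest weight $k-n$, highest weight $n-k$), the operator $L^{n-k}$ carries the bottom weight space isomorphically onto the top, giving the injectivity (7); assembling this over all strings meeting degree $k$ shows $L^{n-k}$ is injective on all of $\bigwedge^{k}V^{*}$, and since $\dim\bigwedge^{k}V^{*}=\dim\bigwedge^{2n-k}V^{*}$, injectivity upgrades to the bijectivity asserted in (8).
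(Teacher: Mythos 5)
Your proposal is correct, but it follows a genuinely different route from the paper. The paper does not prove parts (1)--(6) at all --- it cites them from \cite[Prop.1.2.30]{complexgeometry} --- and only proves (7) and (8). For (7) it gives a short direct argument from the commutator identity (4): for $0\neq\alpha\in P^{k}$, take the smallest $i>0$ with $L^{i}(\alpha)=0$; since $\Lambda\alpha=0$ and $L^{i}\alpha=0$ one gets $0=[L^{i},\Lambda](\alpha)=i(k-n+i-1)L^{i-1}(\alpha)$, which forces $i-1=n-k$ and hence $L^{n-k}(\alpha)=L^{i-1}(\alpha)\neq0$; part (8) then follows by expanding an arbitrary form via the Lefschetz decomposition (5), applying (7) to each primitive component, and using $\dim\bigwedge^{k}V^{*}=\dim\bigwedge^{2n-k}V^{*}$ to pass from injectivity to bijectivity. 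You instead prove the entire statement from scratch: (1)--(2) by direct computation, the key bracket $[L,\Lambda]=H$ by the tensor factorization of $\bigwedge^{*}V^{*}$ over the $J$-invariant planes (reducing to the four-dimensional case), (4) by induction, and (5)--(8) by viewing $(L,\Lambda,H)$ as an $\mathfrak{sl}_{2}$-triple and invoking the classification of finite-dimensional representations. Your route is self-contained and conceptually transparent --- in effect you reconstruct the proof of the cited reference --- while the paper's route is much shorter and, for the two items it actually needs downstream (injectivity of wedging with a power of $\w$ on two-forms), uses only an elementary commutator manipulation, with no appeal to complete reducibility or to the classification of irreducibles. Two small points to tighten on your side: since the ground field is $\R$, either complexify before quoting the $\mathfrak{sl}_{2}$ classification (the conclusions descend because $L$, $\Lambda$, $H$ are real operators, so kernels, images and the decomposition are defined over $\R$), or note that $H$ acts diagonalizably with integer eigenvalues so the weight-string arguments apply verbatim; and the directness of the sum in (5) deserves an explicit sentence (distinct irreducible strings are linearly independent).
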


\begin{proof}
We shall only prove the last two parts. To prove the part $(7)$, let $0\neq \alpha \in P^{k}$. Let $i > 0$ be the smallest integer for which $L^{i}(\alpha) = 0$. Then we have $0 = [L^{i},\Lambda](\alpha) = i(k-n + i -1)L^{i-1}(\alpha)$. So we should have $k-n + i -1 = 0$ which means $L^{n-k}(\alpha) = L^{i-1}(\alpha) \neq 0$. To prove the last part let $0\neq \alpha \in \bigwedge^{k}V^{*}$. By part $(5)$ we can write $\alpha = \oplus_{i\geq 0}\alpha_{i}$ where $\alpha_{i} = L^{i}(\beta_{i})$ for some $\beta_{i}\in P^{k - 2i}$. So we have $L^{n-k}(\alpha) = \oplus_{i\geq 0}L^{n-k + i}(\beta_{i})$. But since $\alpha \neq 0$ so there must be a $j$ such that $\beta_{j}\neq 0$. Then by part $(7)$ we have $L^{n-k + 2j}(\beta_{j})\neq 0$ hence $L^{n-k+j}(\beta_{j})\neq 0$ which implies $L^{n-k}(\alpha)\neq 0$.
\end{proof}

\begin{proof}[Proof of Lemma \ref{lemma}]
Letting $k = 2$ in part \ref{lastpart} of Theorem \ref{Kahler identities} proves the Lemma.
\end{proof}

\newpage
\bibliographystyle{unsrt}
\bibliography{bib.bib} 

\begin{thebibliography}{1}

\bibitem{gromovoriginal}
M.~Gromov.
\newblock Pseudo holomorphic curves in symplectic manifolds.
\newblock {\em Invent. Math.}, 82(2):307--347, 1985.

\bibitem{partial}
Mikhael Gromov.
\newblock {\em Partial differential relations}, volume~9 of {\em Ergebnisse der
  Mathematik und ihrer Grenzgebiete (3) [Results in Mathematics and Related
  Areas (3)]}.
\newblock Springer-Verlag, Berlin, 1986.

\bibitem{gromoveliashberg}
Ya.~M. Eliashberg.
\newblock A theorem on the structure of wave fronts and its application in
  symplectic topology.
\newblock {\em Funktsional. Anal. i Prilozhen.}, 21(3):65--72, 96, 1987.

\bibitem{flexibleexample}
Lev Buhovsky and Emmanuel Opshtein.
\newblock Some quantitative results in {$\mathcal{C}^0$} symplectic geometry.
\newblock {\em Invent. Math.}, 205(1):1--56, 2016.

\bibitem{seyfeddini}
Vincent Humili\`ere, R\'{e}mi Leclercq, and Sobhan Seyfaddini.
\newblock Coisotropic rigidity and {$C^0$}-symplectic geometry.
\newblock {\em Duke Math. J.}, 164(4):767--799, 2015.

\bibitem{complexgeometry}
Daniel Huybrechts.
\newblock {\em Complex geometry}.
\newblock Universitext. Springer-Verlag, Berlin, 2005.

\bibitem{polterovich}
Leonid Polterovich.
\newblock {\em The geometry of the group of symplectic diffeomorphisms}.
\newblock Lectures in Mathematics ETH Z\"{u}rich. Birkh\"{a}user Verlag, Basel,
  2001.

\bibitem{jhol}
Dusa McDuff and Dietmar Salamon.
\newblock {\em {$J$}-holomorphic curves and symplectic topology}, volume~52 of
  {\em American Mathematical Society Colloquium Publications}.
\newblock American Mathematical Society, Providence, RI, second edition, 2012.

\end{thebibliography}
\nocite{*}

\end{document}